\documentclass[11pt]{article} 
\usepackage[a4paper]{geometry}


\usepackage{amsmath,amsthm,amssymb,amsfonts}
\usepackage{mathrsfs}
\usepackage{color,xcolor}
\usepackage{graphicx}
\usepackage{galois}
\usepackage[colorlinks,linkcolor=blue,citecolor=blue]{hyperref} 
\usepackage{enumerate}
\usepackage{cite}

\graphicspath{{Figures/}} 

\newtheorem{Th}{Theorem}[section]
\newtheorem{Prop}[Th]{Proposition}
\newtheorem{Lem}[Th]{Lemma}
\newtheorem{Coro}[Th]{Corollary}
\newtheorem{Rem}[Th]{Remark}

\numberwithin{equation}{section}

\newcommand{\Real}{\mathbb{R}}
\newcommand{\Gmf}{\Gamma\hskip-1mm}
\newcommand{\Tran}[1]{#1^\mathrm{T}}
\newcommand{\Tr}{\mathrm{Tr}}
\newcommand{\Qfs}{\mathcal{Q}}
\newcommand{\Fun}{\mathcal{F}}

\newcommand{\comps}{\!\comp\!}
\newcommand{\nxn}[0]{{n\times n}}
\newcommand{\ball}[0]{\mathcal{B}}

\newcommand{\md}{\,\mathrm{d}}
\newcommand{\me}{\mathrm{e}}
\newcommand{\ellp}{\ell_p}
\newcommand{\ellt}{\ell_2}

\newcommand{\Sob}[2]{H^#2}

\def\clap#1{\hbox to 0pt{\hss#1\hss}}


\title{Notes on the Sobolev (Semi)Norms  of Quadratic Functions}
\author{Zaikun ZHANG\thanks{Institute of Computational Mathematics and Scientific/Engineering Computation, Chinese Academy of Sciences, P.O. Box 2719, Beijing 100190, CHINA ({\tt zhangzk@lsec.cc.ac.cn}).}}
\date{\today}
\begin{document}
\maketitle
\begin{abstract}
  This paper studies the $\Sob{2}{0}$ norm and $\Sob{2}{1}$ seminorm
  of quadratic functions. The~(semi)norms are expressed explicitly in
  terms of the coefficients of the quadratic function under
  consideration when the underlying domain is an $\ellp$-ball~($1\leq p
  \leq\infty$) in $\Real^n$. 

  \noindent{\bf Keywords:} $\Sob{2}{0}$ Norm~$\cdot$~$\Sob{2}{1}$
  Seminorm~$\cdot$~Quadratic Function~$\cdot$~$\ellp$-Ball
\end{abstract}

\section{Motivation and Introduction}
Zhang \cite{SobolevDFO} studies the $\Sob{2}{1}$ sminorm of quadratic
functions over $\ellt$-balls of $\Real^n$, and applies it to
derivative-free optimization problems. This
paper will investigate the $\Sob{2}{0}$ norm and $\Sob{2}{1}$ seminorm
of quadratic functions over $\ellp$-balls \mbox{($1\leq p
\leq\infty$)} of $\Real^n$. These (semi)norms may be useful for derivative-free optimization when considering trust region methods with $\ellp$ trust region.

This paper is organized as follows. Section \ref{notation} describes
the notations.~Section \ref{formula} presents the formulae for the Sobolev (semi)norms of quadratic functions over $\ellp$-balls. The formulae are proved in Section \ref{proofs}. Section \ref{diss} contains some discussions. Some propositions about the Gamma function are given in Appendix for reference. 

\section{Notation}\label{notation}
In this paper,~the following symbols will be used unless otherwise specified.
\begin{itemize}
  \item $Q$ is a quadratic function defined by 
    \begin{equation}
    Q(x) = \frac{1}{2}\Tran{x}Bx+\Tran{g}x+c,~~x\in \Real^n,
    \end{equation}
    where $B\in\Real^{\nxn}$ is a symmetric matrix,~$g\in\Real^n$ is a vector and~$c \in\Real$ is a scalar.
  \item $D\in\Real^{\nxn}$~is the diagonal of $B$.
\item $p\geq 1$ is a positive constant or $p=\infty$.
\item $\|\cdot\|_p$ is the $\ellp$-norm~on $\Real^n$,~and $\ball_p^r$ is the $\ellp$-ball centering $0$ with radius $r>0$,~i.e.,
  \begin{equation}
\ball_p^r = \{x\in\Real^n;~\|x\|_p \leq r\}.
  \end{equation}
Besides,~$V_{p}$ is the volume of $\ball_p^1$.
\item Given $x\in\Real^n$~and~$i\in\{1,~2,~...,~n\}$,~$x_i$ is the $i$-th coordinate of $x$.
\item $\Gamma(\cdot)$~is the Gamma function,~and~$\beta(\cdot,\cdot)$ is the Beta function \cite{artin-gamma}.
\item $\gamma$~is the Euler constant.
\item The following symbols are used for the Sobolev (semi)norms \cite{EvansPDE} of a function $f$ over a domain $\Omega\in\Real^n$:
\begin{align}
\|f\|_{\Sob{2}{0}(\Omega)}&\equiv\left[\int_{\Omega}|f(x)|^2\md x\right]^{1/2},\\
|f|_{\Sob{2}{1}(\Omega)}&\equiv\left[\int_{\Omega}\|\nabla f(x)\|_2^2\md x\right]^{1/2}.
\end{align}
\end{itemize}
\section{Formulae for the Sobolev (Semi)Norms}\label{formula}
The $\Sob{2}{0}$ norm and $\Sob{2}{1}$ seminorm of $Q$ over
$\ball_p^r$ can be expressed explicitly in terms of its coefficients
$B$,~$g$~and~$c$. We present the formulae in Theorem \ref{Thp}.~Notice that the case $p=\infty$ can also be covered by these formulae,~as will be pointed out in Remark \ref{inftycase}. The proof of Theorem \ref{Thp} is in Section \ref{proofs}. 

\begin{Th}\label{Thp}
\begin{equation}
\|Q\|^2_{\Sob{2}{0}(\ball_p^r)} =\frac{I}{2}\|B\|_\mathrm{F}^2+\frac{I}{4}\Tr^2B+\frac{J-3I}{4}\|D\|_\mathrm{F}^2+K(c\Tr B+\|g\|_2^2)+Vc^2,\\
\end{equation}
and 
\begin{equation}
|Q|^2_{\Sob{2}{1}(\ball_p^r)} =K\|B\|_\mathrm{F}^2+V\|g\|^2,
\end{equation}
where
\begin{align}
\label{Ip}I&=\frac{[\Gamma(\frac{3}{p})]^2\Gamma(\frac{n}{p})}{[\Gamma(\frac{1}{p})]^2\Gamma(\frac{n+4}{p})}\cdot\frac{n}{n+4}\cdot V_p\,r^{n+4},\\
\label{Jp}J&=\frac{\Gamma(\frac{5}{p})\Gamma(\frac{n}{p})}{\Gamma(\frac{1}{p})\Gamma(\frac{n+4}{p})}\cdot\frac{n}{n+4}\cdot V_p\,r^{n+4},\\
\label{Kp}K&=\frac{\Gamma(\frac{3}{p})\Gamma(\frac{n}{p})}{\Gamma(\frac{1}{p})\Gamma(\frac{n+2}{p})}\cdot\frac{n}{n+2}\cdot V_p\,r^{n+2},\\
V&=V_p\,r^n.
\end{align}
\end{Th}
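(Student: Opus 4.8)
\medskip
\noindent\textbf{Proof proposal.}
The plan is to reduce both quantities to a small set of polynomial moments of $\ball_p^r$ and then to evaluate those moments in closed form. The decisive structural fact is that $\ball_p^r$ is invariant under every sign flip $x_i\mapsto-x_i$ and under all coordinate permutations. Hence $\int_{\ball_p^r}\prod_{i}x_i^{\alpha_i}\,\md x=0$ whenever some $\alpha_i$ is odd, and the surviving even moments depend only on the shape of the exponent vector, not on the particular indices. I therefore isolate the four scalars
\[
V=\int_{\ball_p^r}\md x,\qquad K=\int_{\ball_p^r}x_1^2\,\md x,\qquad I=\int_{\ball_p^r}x_1^2x_2^2\,\md x,\qquad J=\int_{\ball_p^r}x_1^4\,\md x,
\]
and aim to show that the theorem is exactly their bookkeeping.

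For the seminorm I would write $\nabla Q(x)=Bx+g$, so that $\|\nabla Q\|_2^2=\Tran{x}B^2x+2\Tran{g}Bx+\|g\|_2^2$ using $\Tran{B}=B$. The linear middle term integrates to $0$ by the sign symmetry; the constant term contributes $V\|g\|_2^2$; and, since off-diagonal second moments vanish, $\int\Tran{x}B^2x\,\md x=K\sum_i(B^2)_{ii}=K\,\Tr(B^2)=K\|B\|_\mathrm{F}^2$, the last equality being $\Tr(B^2)=\|B\|_\mathrm{F}^2$ for symmetric $B$. This already yields the $\Sob{2}{1}$ formula. For the norm I expand $Q^2$ with $Q=\tfrac12\Tran{x}Bx+\Tran{g}x+c$; the two odd-degree cross terms vanish, and the easy pieces give $\int(\Tran{g}x)^2\,\md x=K\|g\|_2^2$, $\int c\,\Tran{x}Bx\,\md x=cK\Tr B$ and $\int c^2\,\md x=Vc^2$, matching three of the terms. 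Everything then hinges on $\tfrac14\int_{\ball_p^r}(\Tran{x}Bx)^2\,\md x$.

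The central algebraic step is to decompose the fourth-moment tensor. Symmetry forces
\[
\int_{\ball_p^r}x_ix_jx_kx_l\,\md x=I\,(\delta_{ij}\delta_{kl}+\delta_{ik}\delta_{jl}+\delta_{il}\delta_{jk})+(J-3I)\,\delta_{ijkl},
\]
where $\delta_{ijkl}=1$ iff $i=j=k=l$; the fully diagonal correction is pinned down by demanding that the $i=j=k=l$ entry equal $J$ rather than the \emph{isotropic} value $3I$. Contracting this tensor against $B_{ij}B_{kl}$ and again using $\Tran{B}=B$ sends the three paired terms to $(\Tr B)^2+2\|B\|_\mathrm{F}^2$ and the diagonal term to $\|D\|_\mathrm{F}^2$, so that $\int(\Tran{x}Bx)^2\,\md x=I[(\Tr B)^2+2\|B\|_\mathrm{F}^2]+(J-3I)\|D\|_\mathrm{F}^2$; dividing by $4$ reproduces the remaining three terms of the $\Sob{2}{0}$ formula.

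It remains to evaluate $V,K,I,J$ over the $\ellp$-ball, which I expect to be the main obstacle. After the dilation $x\mapsto rx$, which extracts $r^{n+|\alpha|}$, I would invoke the Dirichlet--Liouville integral
\[
\int_{\{x\ge0,\;\sum_i x_i^p\le1\}}\prod_{i=1}^n x_i^{a_i-1}\,\md x=\frac{1}{p^n}\,\frac{\prod_{i}\Gamma(a_i/p)}{\Gamma\!\big(1+\tfrac1p\sum_i a_i\big)},
\]
provable by the substitution $u_i=x_i^p$ that turns the $\ellp$-ball into the standard simplex, and then multiply by $2^n$ to sweep all orthants. Taking the exponent tuples $(1,\dots,1)$, $(3,1,\dots,1)$, $(5,1,\dots,1)$ and $(3,3,1,\dots,1)$ produces $V_p$, $K$, $J$ and $I$ in turn; the factors $\tfrac{n}{n+2}$ and $\tfrac{n}{n+4}$ then fall out of a single application of $\Gamma(1+z)=z\Gamma(z)$ to the denominator $\Gamma(1+\tfrac1p\sum_i a_i)$, and the ratios collapse to the stated expressions. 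The genuinely delicate part is establishing the Dirichlet integral and tracking the $\Gamma$-ratios without error; the case $p=\infty$ should then follow either by letting $p\to\infty$ in these formulae or, more directly, from the product structure of moments over the cube $[-r,r]^n$.
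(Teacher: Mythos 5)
Your proposal is correct, and the reduction of the theorem to the four moments $V,K,I,J$ (odd moments vanishing by sign symmetry, the seminorm expansion, and the fourth-moment bookkeeping for $\int(\Tran{x}Bx)^2\md x$) coincides in substance with the paper's Lemmas \ref{null} and \ref{integrals} — your delta-tensor identity
\[
\int_{\ball_p^r}x_ix_jx_kx_l\,\md x=I\,(\delta_{ij}\delta_{kl}+\delta_{ik}\delta_{jl}+\delta_{il}\delta_{jk})+(J-3I)\,\delta_{ijkl}
\]
is just a compact restatement of the paper's explicit index sums, and it is valid here not by isotropy (which fails for $p\neq2$) but, as you note, by sign-flip and permutation invariance checked case by case. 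Where you genuinely diverge is the evaluation of the moments. The paper (Lemma \ref{int}) integrates out $n-2$ variables to pull out the volume $V_{p,n-2}$ of a lower-dimensional ball, applies the two-dimensional substitution $u^{p/2}=\rho\cos\theta$, $v^{p/2}=\rho\sin\theta$ to land on Beta functions, and then eliminates $V_{p,n-2}$ by normalizing against the case $k_1=k_2=0$; you instead substitute $u_i=x_i^p$ to map the positive orthant of the ball onto the standard simplex and invoke the Dirichlet--Liouville formula, which delivers $V_p$, $K$, $J$, $I$ simultaneously from the exponent tuples $(1,\dots,1)$, $(3,1,\dots,1)$, $(5,1,\dots,1)$, $(3,3,1,\dots,1)$, with the factors $\tfrac{n}{n+2}$, $\tfrac{n}{n+4}$ emerging from $\Gamma(1+z)=z\Gamma(z)$ exactly as you say (I verified the ratios; they reproduce (\ref{Ip}--\ref{Kp})). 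Your route is more general — it gives every mixed even moment in one stroke and needs no normalization trick — while the paper's is self-contained in two variables and never needs the $n$-dimensional Dirichlet identity. One small gap to close: for $p=\infty$ the theorem's constants are \emph{defined} as limits of the finite-$p$ expressions, so "letting $p\to\infty$ in these formulae" requires justifying the interchange of limit and integral (the paper does this by dominated convergence); alternatively your direct cube computation works, but then you must still invoke Proposition \ref{limit} to identify $\frac{V_\infty r^{n+k_1+k_2}}{(k_1+1)(k_2+1)}$ with the limiting Gamma-ratio expression.
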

\begin{Rem}\label{inftycase}
Theorem \ref{Thp} covers the case $p=\infty$ as well.~When $p=\infty$,~we interpret~(\ref{Ip}~--~\ref{Kp})~in the sense of limit,~i.e.,~
\begin{align}
\addtocounter{equation}{-3}
\tag{\theequation$'$}
I&=\lim_{p\rightarrow\infty}\frac{[\Gamma(\frac{3}{p})]^2\Gamma(\frac{n}{p})}{[\Gamma(\frac{1}{p})]^2\Gamma(\frac{n+4}{p})}\cdot\frac{n}{n+4}\cdot V_\infty\,r^{n+4},\\
\addtocounter{equation}{1}
\tag{\theequation$'$}
J&=\lim_{p\rightarrow\infty}\frac{\Gamma(\frac{5}{p})\Gamma(\frac{n}{p})}{\Gamma(\frac{1}{p})\Gamma(\frac{n+4}{p})}\cdot\frac{n}{n+4}\cdot V_\infty\,r^{n+4},\\
\addtocounter{equation}{1}
\tag{\theequation$'$}
K&=\lim_{p\rightarrow\infty}\frac{\Gamma(\frac{3}{p})\Gamma(\frac{n}{p})}{\Gamma(\frac{1}{p})\Gamma(\frac{n+2}{p})}\cdot\frac{n}{n+2}\cdot V_\infty\,r^{n+2}.
\end{align}
The limits above can be calculated easily with the help of proposition \ref{limit}.
\end{Rem}
As illustrations of Theorem \ref{Thp},~we present the (semi)norms with $p=2$ as follows.
\begin{Coro}\label{Th2}
\begin{align}
\|Q\|^2_{\Sob{2}{0}(\ball_2^r)} &= V_{2}\,r^n\!\left[\frac{r^4\left(2\|B\|_\mathrm{F}^2+\Tr^2 B\right)}{4(n+2)(n+4)}+\frac{r^2(c\Tr B+\|g\|_2^2)}{n+2}+c^2\right];\\
|Q|^2_{\Sob{2}{1}(\ball_2^r)} &=
V_{2}\,r^n\!\left[\frac{r^2}{n+2}\|B\|_\mathrm{F}^2+\|g\|_2^2\right].
\label{H0SNL2}
\end{align}
\end{Coro}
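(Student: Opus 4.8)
The plan is to derive Corollary \ref{Th2} as a direct specialization of Theorem \ref{Thp} to the case $p=2$, so that no new integration is needed: I would simply set $p=2$ in the closed forms \eqref{Ip}--\eqref{Kp} and in $V=V_p r^n$ and then evaluate the Gamma factors. The only arguments that occur are $\Gamma(\tfrac12)$, $\Gamma(\tfrac32)$, $\Gamma(\tfrac52)$, $\Gamma(\tfrac n2)$, $\Gamma(\tfrac{n+2}{2})$ and $\Gamma(\tfrac{n+4}{2})$. Starting from $\Gamma(\tfrac12)=\sqrt{\pi}$ and the recurrence $\Gamma(z+1)=z\Gamma(z)$, I would record $\Gamma(\tfrac32)=\tfrac12\sqrt{\pi}$, $\Gamma(\tfrac52)=\tfrac34\sqrt{\pi}$, $\Gamma(\tfrac{n+2}{2})=\tfrac n2\,\Gamma(\tfrac n2)$ and $\Gamma(\tfrac{n+4}{2})=\tfrac{n(n+2)}{4}\,\Gamma(\tfrac n2)$.

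Substituting these, the common factor $\Gamma(\tfrac n2)$ cancels throughout and the powers of $\pi$ cancel as well, so that $I$, $J$ and $K$ collapse to the rational expressions $I=\tfrac{V_2 r^{n+4}}{(n+2)(n+4)}$, $J=\tfrac{3V_2 r^{n+4}}{(n+2)(n+4)}$ and $K=\tfrac{V_2 r^{n+2}}{n+2}$, while $V=V_2 r^n$. The one structural point worth flagging is that these give $J=3I$, so the coefficient $(J-3I)/4$ of $\|D\|_\mathrm{F}^2$ vanishes identically: the diagonal term that is present for general $p$ simply disappears when $p=2$. This is why no $\|D\|_\mathrm{F}^2$ contribution survives in the corollary.

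It then remains to insert these values into the two formulae of Theorem \ref{Thp} and to factor out the common $V_2 r^n$. For the $\Sob{2}{0}$ norm, after dropping the $\|D\|_\mathrm{F}^2$ term, grouping $\tfrac I2\|B\|_\mathrm{F}^2+\tfrac I4\Tr^2 B=\tfrac{V_2 r^{n+4}}{4(n+2)(n+4)}\bigl(2\|B\|_\mathrm{F}^2+\Tr^2 B\bigr)$ and writing $K$ and $V$ as above yields exactly the bracketed expression in the statement; the $\Sob{2}{1}$ seminorm is immediate from $K\|B\|_\mathrm{F}^2+V\|g\|_2^2$. I expect no real obstacle here — the argument is pure substitution — so the only place to be careful is the bookkeeping of the half-integer Gamma values and the recurrence, together with noticing the cancellation $J=3I$ that eliminates the diagonal term.
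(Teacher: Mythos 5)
Your proposal is correct and follows exactly the paper's approach: the paper likewise obtains Corollary \ref{Th2} by direct substitution of $p=2$ into Theorem \ref{Thp}, with no separate proof given. Your explicit evaluation of the half-integer Gamma values, the resulting identity $J=3I$ (which kills the $\|D\|_\mathrm{F}^2$ term), and the final grouping are all accurate and simply spell out the details the paper leaves to the reader.
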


Notice that the formula (\ref{H0SNL2}) has been proved in Zhang
\cite{SobolevDFO}.

\section{Proofs of Main Results}\label{proofs}
We assume $n\geq 2$ henceforth,~because everything is trivial when $n=1$.
\subsection{Lemmas}

For Simplicity, we first prove some lemmas.

First we investigate the integrals of some monomials,~which will be presented in the following two lemmas.
\begin{Lem}\label{null}
Suppose~$1\leq i, j, k, l\leq n$.
\begin{itemize}
\item[a.]The integrals of $x_i$~and~$x_ix_jx_k$~over~$\ball_p^r$~are $0$.
\item[b.]The integrals of $x_ix_j$~and~$x_ix_jx_k^2$~over~$\ball_p^r$~are $0$,~provided that $i\neq j$.
\item[c.]The integral of $x_ix_jx_kx_l$~over~$\ball_p^r$~is $0$,~provided that $i$,~$j$~and~$k$~are pairwise different.
\end{itemize}
\end{Lem}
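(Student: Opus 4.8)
The key fact I would exploit is the \emph{symmetry} of the $\ellp$-ball $\ball_p^r$. For every index $i$, the ball is invariant under the reflection $x_i \mapsto -x_i$ (since $\|x\|_p$ depends only on the absolute values $|x_i|$), and it is also invariant under any permutation of the coordinates. The plan is to show that each listed monomial is \emph{odd} with respect to at least one such sign-flip symmetry, so that its integral must vanish by the standard change-of-variables argument: if $\phi:\ball_p^r\to\ball_p^r$ is a measure-preserving involution and the integrand $m$ satisfies $m\circ\phi=-m$, then $\int_{\ball_p^r} m = \int_{\ball_p^r} m\circ\phi = -\int_{\ball_p^r} m$, forcing $\int_{\ball_p^r} m = 0$.

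Concretely, I would organize the proof around counting, for each monomial, how many coordinates appear to an \emph{odd} power. The relevant observation is that a monomial $\prod_s x_s^{a_s}$ changes sign under the reflection $x_i\mapsto -x_i$ precisely when the exponent $a_i$ is odd. So it suffices to exhibit, for each case, a single index whose exponent in the monomial is odd:

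\begin{itemize}
\item[a.] In $x_i$ the exponent of $x_i$ is $1$ (odd); in $x_ix_jx_k$ the total degree is $3$, so some index must occur to an odd power (indeed at least one of the three variables does). Flipping that coordinate's sign negates the monomial.
\item[b.] For $x_ix_j$ with $i\neq j$, the variable $x_i$ occurs to the first power, which is odd; flip $x_i$. For $x_ix_jx_k^2$ with $i\neq j$, the variable $x_i$ again occurs to the first power (note $x_k^2$ is even and does not interfere even if $k$ equals $i$ or $j$ --- but since $i\neq j$, at least $x_i$ or $x_j$ sits at an odd exponent), so flip that coordinate.
\item[c.] For $x_ix_jx_kx_l$ with $i,j,k$ pairwise different, the index $l$ may coincide with one of $i,j,k$, but at least two of $i,j,k$ remain with exponent $1$ after accounting for any collision with $l$; pick such an index and flip its sign.
\end{itemize}

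The mild subtlety --- and the only place requiring care rather than routine bookkeeping --- is handling the possible coincidences among the indices in the assertions that do not assume them all distinct (e.g. $x_ix_jx_k^2$ in part b, or $x_ix_jx_kx_l$ in part c). I would resolve this by verifying in each case that, after merging repeated indices and summing their exponents, at least one distinct variable is left with an odd total exponent; the stated hypotheses (such as $i\neq j$, or $i,j,k$ pairwise different) are exactly what guarantees this. Once the odd-exponent variable is identified, the sign-flip involution and the change-of-variables cancellation finish the argument immediately, so no explicit integration is ever needed.
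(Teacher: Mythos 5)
Your proof is correct. The paper gives no proof at all for this lemma (it states ``Lemma \ref{null} is trivial so we omit the proof''), and your argument---each monomial has some coordinate with an odd exponent, so the measure-preserving reflection $x_i\mapsto -x_i$, which leaves $\ball_p^r$ invariant for every $1\leq p\leq\infty$, negates the integrand and forces the integral to vanish---is precisely the standard argument being alluded to; your case analysis of index coincidences in parts b.\ and c.\ is also sound.
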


Lemma \ref{null} is trivial so we omit the proof.

\begin{Lem}\label{int}
Suppose~$1\leq i < j \leq n$,~and $k_1$, $k_2$ are even natural numbers.~Then 
\begin{equation}
\label{intp}
\int_{\ball_p^r}x_i^{k_1}x_j^{k_2}\md x= \frac{\Gamma(\frac{k_1+1}{p})\Gamma(\frac{k_2+1}{p})\Gamma(\frac{n}{p})}{\Gamma(\frac{1}{p})\Gamma(\frac{1}{p})\Gamma(\frac{n+k_1+k_2}{p})}\cdot\frac{n}{n+k_1+k_2}\cdot V_p\,r^{n+k_1+k_2}.
\end{equation}
\end{Lem}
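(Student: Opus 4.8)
The plan is to evaluate the integral
\begin{equation}
\int_{\ball_p^r}x_i^{k_1}x_j^{k_2}\md x
\end{equation}
by exploiting the structure of the $\ellp$-ball. Since the integrand depends only on two of the $n$ coordinates, my first step is to integrate out the remaining $n-2$ variables. For fixed values of $x_i$ and $x_j$ with $|x_i|^p+|x_j|^p\leq r^p$, the remaining coordinates range over a scaled copy of the $(n-2)$-dimensional $\ellp$-ball of radius $\rho=(r^p-|x_i|^p-|x_j|^p)^{1/p}$, whose volume is $V_p^{(n-2)}\rho^{\,n-2}$, where $V_p^{(n-2)}$ denotes the volume of the unit $\ellp$-ball in $\Real^{n-2}$. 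This reduces the $n$-dimensional integral to a two-dimensional one over the region $|x_i|^p+|x_j|^p\leq r^p$.

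Next I would pass to the two remaining variables. By the evenness of $k_1$ and $k_2$ the integrand is symmetric in sign, so I can restrict to the positive quadrant and multiply by $4$. I would then substitute $u=(x_i/r)^p$, $v=(x_j/r)^p$ to transform the constraint into the standard simplex $u+v\leq 1$, $u,v\geq 0$. This substitution turns powers of $x_i$, $x_j$ and the factor $\rho^{\,n-2}$ into powers of $u$, $v$ and $(1-u-v)$, producing a Dirichlet-type integral over the simplex. Such an integral evaluates to a ratio of Gamma functions by the standard Dirichlet formula
\begin{equation}
\int_{\substack{u,v\geq 0\\ u+v\leq 1}}u^{a-1}v^{b-1}(1-u-v)^{c-1}\md u\md v=\frac{\Gamma(a)\Gamma(b)\Gamma(c)}{\Gamma(a+b+c)}.
\end{equation}
Reading off the exponents $a=\frac{k_1+1}{p}$, $b=\frac{k_2+1}{p}$ and $c=\frac{n-2}{p}+1=\frac{n+p-2}{p}$ should yield the Gamma factors appearing in~(\ref{intp}).

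The main bookkeeping obstacle will be tracking the scalar prefactors so that the unit-ball volume $V_p$ appears cleanly in the final answer. I expect to need the recursive relation expressing $V_p=V_p^{(n)}$ in terms of $V_p^{(n-2)}$, which itself follows from the same Dirichlet computation applied to the full volume; reconciling the Gamma function arising from the exponent $c$ with the lower-dimensional volume constant is the step where care is required. The remaining factor $\frac{n}{n+k_1+k_2}\,r^{n+k_1+k_2}$ should emerge from the radial scaling $r^{n+k_1+k_2}$ together with simplification of $\Gamma(\frac{n+k_1+k_2+p}{p})=\frac{n+k_1+k_2}{p}\Gamma(\frac{n+k_1+k_2}{p})$ against the analogous term in the volume formula. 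Once these prefactors are matched, the formula~(\ref{intp}) follows.
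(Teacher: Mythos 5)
Your proposal is correct and follows the same skeleton as the paper's proof: integrate out the $n-2$ inactive coordinates to pull out the lower-dimensional ball volume, restrict to the positive quadrant using the evenness of $k_1$ and $k_2$, evaluate the resulting two-dimensional integral in closed form, and eliminate the constant $V_p^{(n-2)}$ by running the identical computation with $k_1=k_2=0$ to recover $V_p$. The only substantive difference is the middle step: the paper substitutes $u^{p/2}=\rho\cos\theta$, $v^{p/2}=\rho\sin\theta$ and obtains a product of two Beta functions, whereas you substitute $u=(x_i/r)^p$, $v=(x_j/r)^p$ and invoke the Dirichlet integral over the simplex; these are equivalent routes (the Dirichlet formula is itself typically proved by exactly such a Beta/polar reduction), and your exponents $a=\frac{k_1+1}{p}$, $b=\frac{k_2+1}{p}$, $c=\frac{n-2}{p}+1$ do reproduce~(\ref{intp}) after the $\Gamma(z+1)=z\,\Gamma(z)$ simplification you describe, so the bookkeeping you defer does close. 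One omission relative to the paper: the paper's proof also establishes the lemma for $p=\infty$, interpreting~(\ref{intp}) as a limit and justifying the passage to the limit by Lebesgue's Dominated Convergence Theorem; your argument covers only finite $p$, so to match the paper's scope you would need to append that limiting argument (or state explicitly that the $p=\infty$ case is handled separately, as in the remark following the lemma).
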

\begin{Rem}
Lemma \ref{int} covers the case $p=\infty$ as well.~When $p=\infty$,~we interpret~(\ref{intp})~in the sense of limit,~i.e.,~
\begin{equation}
\tag{\theequation$'$}\label{inti}
\int_{\ball_\infty^r}\!\! x_i^{k_1}x_j^{k_2}\md x= \lim_{p\rightarrow \infty}\frac{\Gamma(\frac{k_1+1}{p})\Gamma(\frac{k_2+1}{p})\Gamma(\frac{n}{p})}{\Gamma(\frac{1}{p})\Gamma(\frac{1}{p})\Gamma(\frac{n+k_1+k_2}{p})}\cdot\frac{n}{n+k_1+k_2}\cdot V_\infty\,r^{n+k_1+k_2}.
\end{equation}
\end{Rem}
\begin{proof}
Without loss of generality,~we assume $r=1$.~Suppose additionally $n\geq 3$~since things are trivial if $n=2$.~Denote the volume of the unit $\ellp$-ball in $\Real^{n-2}$ by $V_{p,n-2}$.

We fist justify~(\ref{intp})~for~$p\in[1,\infty)$,~and then show its validity for $p=\infty$ in the sense of~(\ref{inti}).

When $p\in[1,\infty)$,
\begin{align}
&\int_{B_p^1}x_i^{k_1}x_j^{k_2}\md x\\
=~&\int_{|u|^{p}+|v|^{p}\leq1}u^{k_1}v^{k_2}\md u\md v\int_{w\in\Real^{n-2},\|w\|_p\leq(1-|u|^p-|v|^p)^{\frac{1}{p}}}\md w\\
=~&V_{p,n-2}\int_{|u|^p+|v|^p\leq 1}u^{k_1}v^{k_2}(1-|u|^p-|v|^p)^{\frac{n-2}{p}}\md u\md v\\
=~&4V_{p,n-2}\int_{u^p+v^p\leq 1,~u,v\geq 0}u^{k_1}v^{k_2}(1-u^p-v^p)^{\frac{n-2}{p}}\md u\md v.
\end{align}
Consider the transformation
\begin{align}
u^{\frac{p}{2}}&=\rho\cos\theta,\\
v^{\frac{p}{2}}&=\rho\sin\theta.
\end{align}
Then we have
\begin{align}
&\int_{B_p^1}x_i^{k_1}x_j^{k_2}\md x\\
=~&\frac{16}{p^2}V_{p,n-2}\int_0^{\frac{\pi}{2}}(\cos\theta)^{\frac{2k_1+2}{p}-1}(\sin\theta)^{\frac{2k_2+2}{p}-1}\md\theta\int_0^1\rho^{\frac{2k_1+2k_2+4}{p}-1}(1-\rho^2)^{\frac{n-2}{p}}\md\rho\\
=~&\frac{4}{p^2}V_{p,n-2}\,\beta\!\left(\frac{k_1+1}{p},\frac{k_2+1}{p}\right)\beta\!\left(\frac{k_1+k_2+2}{p},\frac{n-2}{p}+1\right).
\end{align}
By setting $k_1$ and $k_2$ to $0$,~we obtain
\begin{equation}
V_p=\frac{4}{p^2}V_{p,n-2}\,\beta\!\left(\frac{1}{p},\frac{1}{p}\right)\beta\!\left(\frac{2}{p},\frac{n-2}{p}+1\right).
\end{equation}
Hence
\begin{align}
&\int_{B_p^1}x_i^{k_1}x_j^{k_2}\md x\\
=~&\frac{\beta\!\left(\frac{k_1+1}{p},\frac{k_2+1}{p}\right)\beta\!\left(\frac{k_1+k_2+2}{p},\frac{n-2}{p}+1\right)}{\beta\!\left(\frac{1}{p},\frac{1}{p}\right)\beta\!\left(\frac{2}{p},\frac{n-2}{p}+1\right)}V_p\\
=~&\frac{\Gmf\left(\frac{k_1+1}{p}\right)\Gmf\left(\frac{k_2+1}{p}\right)\Gmf\left(\frac{k_1+k_2+2}{p}\right)\Gmf\left(\frac{n-2}{p}+1\right)\Gmf\left(\frac{2}{p}\right)\Gmf\left(\frac{n}{p}+1\right)}{\Gmf\left(\frac{1}{p}\right)\Gmf\left(\frac{1}{p}\right)\Gmf\left(\frac{2}{p}\right)\Gmf\left(\frac{n-2}{p}+1\right)\Gmf\left(\frac{k_1+k_2+2}{p}\right)\Gmf\left(\frac{n+k_1+k_2}{p}+1\right)}V_p\\
=~&\frac{\Gmf\left(\frac{k_1+1}{p}\right)\Gmf\left(\frac{k_2+1}{p}\right)\Gmf\left(\frac{n}{p}\right)}{\Gmf\left(\frac{1}{p}\right)\Gmf\left(\frac{1}{p}\right)\Gmf\left(\frac{n+k_1+k_2}{p}\right)}\cdot\frac{n}{n+k_1+k_2}V_p.
\end{align}
Thus~(\ref{intp})~holds for finite $p$.

Now consider infinite $p$.~According to Lebesgue's Dominated Convergence Theorem,~we have
\begin{equation}
\int_{\ball_p^1}\!x_i^{k_1}x_j^{k_2}\md x\rightarrow\!\!\int_{\ball_\infty^1}\!\!x_i^{k_1}x_j^{k_2}\md x~~\textnormal{and}~~~V_p\rightarrow V_{\infty}~~~(p\rightarrow\infty).
\end{equation}
which together with~(\ref{intp})~implies the convergence of
$\frac{\Gamma\left(\frac{k_1+1}{p}\right)\Gamma\left(\frac{k_2+1}{p}\right)\Gamma\left(\frac{n}{p}
\right)}{\Gamma\left(\frac{1}{p}\right)\Gamma\left(\frac{1}{p}\right)\Gamma\left(\frac{n+k_1+k_2}{p}\right)}$
when $p$ tends to infinity and the validity of~(\ref{inti}).~Thus~(\ref{intp})~holds for infinite $p$ in the sense of limit.
\end{proof}
\begin{Rem}
Via straightforward calculus,~we can show that
\begin{equation}
\int_{\ball_{\infty}^r}\!\! x_i^{k_1}x_j^{k_2}\md x=\frac{V_{\infty}\,r^{n+k_1+k_2}}{(k_1+1)(k_2+1)},
\end{equation}
which is the same with~(\ref{inti})~according to Proposition \ref{limit}.
\end{Rem}
Now consider some more integrals which will be used in the computation of the (semi)norms. 
\begin{Lem}\label{integrals}
Denote $\int_{\ball_p^r}x_1^2x_2^2\md x$,~$\int_{\ball_p^r}x_1^4\md x$ and $\int_{\ball_p^r}x_1^2\md x$ by $I$,~$J$~and~$K$.~Then we have
\begin{itemize}
\item[a.]
\begin{align}
\label{easy1}\int_{\ball_p^r}&\Tran{g}x\md x= 0;\\
\int_{\ball_p^r}&\Tran{g}Bx\md x= 0;\\
\label{easy2}\int_{\ball_p^r}&(\Tran{g}x)(\Tran{x}Bx)\md x= 0.
\end{align}
\item[b.]
\begin{align}
\label{dif1}\int_{\ball_p^r}\Tran{x}Bx\md x&= K\Tr B;\\
\label{dif2}\int_{\ball_p^r}\Tran{x}B^2x\md x&= K\|B\|_\mathrm{F}^2;\\
\label{diff}\int_{\ball_p^r}(\Tran{x}Bx)^2\md x&= I(2\|B\|_\mathrm{F}^2+\Tr^2B)+(J-3I)\|D\|_\mathrm{F}^2;\\
\label{dif3}\int_{\ball_p^r}(\Tran{g}x)^2\md x&= K\|g\|_2^2.
\end{align}
\end{itemize}
\end{Lem}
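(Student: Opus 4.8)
The plan is to reduce every integral to the three reference moments $I=\int_{\ball_p^r}x_1^2x_2^2\md x$, $J=\int_{\ball_p^r}x_1^4\md x$ and $K=\int_{\ball_p^r}x_1^2\md x$, exploiting that $\ball_p^r$ is invariant under permutations of the coordinates and under sign changes. This invariance supplies two facts I will use throughout: first, $\int_{\ball_p^r}x_i^2\md x=K$ and $\int_{\ball_p^r}x_i^4\md x=J$ for every $i$, and $\int_{\ball_p^r}x_i^2x_j^2\md x=I$ for every $i\neq j$, so that $I$, $J$, $K$ are genuinely well defined; second, any monomial in which some variable occurs to an odd power integrates to zero, which is exactly the content of Lemma \ref{null}.

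Part (a) will then be a direct consequence of Lemma \ref{null}a. Expanding $\Tran{g}x=\sum_i g_ix_i$, $\Tran{g}Bx=\sum_{i,j}g_iB_{ij}x_j$ and $(\Tran{g}x)(\Tran{x}Bx)=\sum_{i,j,k}g_iB_{jk}x_ix_jx_k$, every surviving term is either a single variable or a cubic monomial $x_ix_jx_k$, all of which integrate to zero by Lemma \ref{null}a. For the first, second and fourth identities of part (b) I would follow the same pattern: writing $\Tran{x}Bx=\sum_{i,j}B_{ij}x_ix_j$ and discarding the off-diagonal terms (they integrate to zero by Lemma \ref{null}b) leaves $\sum_iB_{ii}\int_{\ball_p^r}x_i^2\md x=K\Tr B$; applying this to $B^2$ gives $K\Tr(B^2)$, and since $B$ is symmetric $\Tr(B^2)=\sum_{i,j}B_{ij}^2=\|B\|_\mathrm{F}^2$; likewise $(\Tran{g}x)^2=\sum_{i,j}g_ig_jx_ix_j$ collapses to $\sum_ig_i^2\int_{\ball_p^r}x_i^2\md x=K\|g\|_2^2$.

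The only substantial computation is $\int_{\ball_p^r}(\Tran{x}Bx)^2\md x=\sum_{i,j,k,l}B_{ij}B_{kl}\,T_{ijkl}$, where $T_{ijkl}=\int_{\ball_p^r}x_ix_jx_kx_l\md x$. By Lemma \ref{null} and the symmetry argument above, $T_{ijkl}=J$ when $i=j=k=l$, $T_{ijkl}=I$ when the four indices split into two distinct equal pairs, and $T_{ijkl}=0$ otherwise. The clean way to contract this against $B_{ij}B_{kl}$ is to encode $T$ as the single tensor identity
\[
T_{ijkl}=I\left(\delta_{ij}\delta_{kl}+\delta_{ik}\delta_{jl}+\delta_{il}\delta_{jk}\right)+(J-3I)\,\delta_{ij}\delta_{jk}\delta_{kl},
\]
verified case by case: the bracket equals $3$ when all four indices agree and exactly $1$ on each two-pair pattern, so the correction $(J-3I)$ restores the value $J$ on the diagonal while leaving the two-pair value $I$ untouched. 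Substituting and contracting, the three $\delta$-pairings produce $\Tr^2B$, $\|B\|_\mathrm{F}^2$ and $\sum_{i,j}B_{ij}B_{ji}=\|B\|_\mathrm{F}^2$ (the symmetry $B=\Tran{B}$ merging the last two), while the correction term yields $\sum_iB_{ii}^2=\|D\|_\mathrm{F}^2$, giving $I(2\|B\|_\mathrm{F}^2+\Tr^2B)+(J-3I)\|D\|_\mathrm{F}^2$ as claimed. I expect the bookkeeping in this final step to be the main obstacle: one must ensure the ``two distinct pairs'' count does not spuriously absorb the all-indices-equal case, which is precisely the role of the $(J-3I)$ correction, and one must remember to invoke $B=\Tran{B}$ when collapsing the $\delta_{il}\delta_{jk}$ contraction.
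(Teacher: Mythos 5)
Your proof is correct and takes essentially the same route as the paper's: expand the forms, kill every monomial containing an odd-power factor via Lemma \ref{null}, identify the surviving moments with $I$, $J$, $K$ by permutation symmetry of $\ball_p^r$, and contract against $B_{ij}B_{kl}$. Your $\delta$-tensor identity for $T_{ijkl}$ is just a compact repackaging of the paper's explicit grouping of the surviving index patterns ($i=j=k=l$; $i=k\neq j=l$ or $i=l\neq j=k$; $i=j\neq k=l$), and it produces the identical algebra $I\left(2\|B\|_\mathrm{F}^2+\Tr^2B\right)+(J-3I)\|D\|_\mathrm{F}^2$.
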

\begin{proof}
(\ref{easy1}~--~\ref{easy2})~follow directly from Lemma \ref{null}.~As for~(\ref{dif1}~--~\ref{dif3}),~we only verify~(\ref{diff})~as an example,~because the others are similar and much easier.

Denote the~($i, j$)~entry of $B$ by $B_{ij}$.~According to Lemma \ref{null},
\begin{equation}
\begin{split}
&~\int_{\ball_p^r}(\Tran{x}Bx)^2\md x\\
=&~\int_{\ball_p^r}\sum_{i,j,k,l}(x_iB_{ij}x_j)(x_kB_{kl}x_l)\md x\\
=&~\int_{\ball_p^r}\left(\sum_{i}B^2_{ii}x_i^4+2\sum_{i\neq j}B^2_{ij}x_i^2x_j^2+\sum_{i\neq j}B_{ii}B_{jj}x_i^2x_j^2\right)\md x\\
=&~J\|D\|_\mathrm{F}^2+2I(\|B\|^2_\mathrm{F}-\|D\|^2_\mathrm{F})+I(\Tr^2B-\|D\|_\mathrm{F}^2)\\
=&~I(2\|B\|_\mathrm{F}^2+\Tr^2B)+(J-3I)\|D\|_\mathrm{F}^2.
\end{split}
\end{equation}
\end{proof}
%
\subsection{Proofs}
Now we give the proofs of our main results.

With Lemma \ref{int} and \ref{integrals},~the proof of Theorem \ref{Thp} is nearly completed.~We present it as follows.
\begin{proof}
According to Lemma \ref{integrals},
\begin{equation}
\begin{split}
&~\|Q\|^2_{\Sob{2}{0}(\ball_p^r)}\\
=&~\int_{\ball_p^r}\left[\frac{1}{2}\Tran{x}Bx+\Tran{g}x+c\right]^2\md x\\
=&~\int_{\ball_p^r}\left[\frac{1}{4}(\Tran{x}Bx)^2+(\Tran{g}x)^2+c\Tran{x}Bx+c^2\right]\md x\\
=&~\frac{1}{4}I(2\|B\|_\mathrm{F}^2+\Tr^2B)+\frac{1}{4}(J-3I)\|D\|_\mathrm{F}^2\\
 &~+K(\|g\|_2^2+c\Tr B)+V_{p}r^nc^2,\\
\end{split}
\end{equation}
and
\begin{equation}
\begin{split}
&~|Q|^2_{\Sob{2}{1}(\ball_p^r)}\\
=&~\int_{\ball_p^r}\|Bx+g\|_2^2\md x\\
=&~\int_{\ball_p^r}\left[\Tran{x}B^2x+\|g\|_2^2\right]\md x\\
=&~K\|B\|_\mathrm{F}^2+V_{p}r^n\|g\|_2^2.
\end{split}
\end{equation}
Now apply Lemma \ref{int}.

Since Lemma \ref{int} holds for infinite $p$ in the sense of limit,~so does Theorem \ref{Thp}.
\end{proof}
From Theorem \ref{Thp},~Corollary \ref{Th2} follows directly.
\section{Discussions}\label{diss}
\subsection{The Invariance Under Orthogonal Transformations}
Denote the space of all quadratic functions on $\Real^n$ by $\Qfs$.~A functional $\Fun$ on $\Qfs$ is said to be invariant under orthogonal transformations provided that 
\begin{equation}
\Fun(Q\comps T) = \Fun(Q).
\end{equation}
for any $Q\in\Qfs$ and any orthogonal transformation $T$ on $\Real^n$.

\begin{Prop}Consider $\|\cdot\|_{\Sob{2}{0}(\ball_p^r)}$ and $|\cdot|_{\Sob{2}{1}(\ball_p^r)}$ as functionals on $\Qfs$. Then
\begin{itemize}
\item[a.]$\|\cdot\|_{\Sob{2}{0}(\ball_p^r)}$ is invariant under orthogonal transformations if and only if $p = 2$;
\item[b.]$|\cdot|_{\Sob{2}{1}(\ball_p^r)}$ is invariant under orthogonal transformations for any $p$.
\end{itemize}
\end{Prop}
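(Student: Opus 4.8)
The plan is to read both functionals off the explicit formulae of Theorem \ref{Thp} and track which coefficient-quantities survive an orthogonal change of variables. Writing $Q(Tx)=\frac{1}{2}\Tran{x}(\Tran{T}BT)x+\Tran{(\Tran{T}g)}x+c$, I see that composing with an orthogonal $T$ replaces $B$ by $\Tran{T}BT$ and $g$ by $\Tran{T}g$ while fixing $c$. Consequently $\|B\|_\mathrm{F}^2$, $\Tr B$, $\Tr^2B$, $\|g\|_2^2$ and $c$ are all invariant, whereas $\|D\|_\mathrm{F}^2$, the squared Frobenius norm of the diagonal of $B$, is in general not, since the diagonal of $\Tran{T}BT$ differs from that of $B$.

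For part b.) this settles everything at once: the formula $|Q|^2_{\Sob{2}{1}(\ball_p^r)}=K\|B\|_\mathrm{F}^2+V\|g\|_2^2$ is built only from invariant quantities, so the seminorm is invariant for every $p$. (Invariance of a nonnegative functional and of its square are equivalent, which I would note once and use throughout.)

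For part a.) I would subtract using the $\Sob{2}{0}$-formula: every term cancels except one, giving $\|Q\comps T\|^2_{\Sob{2}{0}(\ball_p^r)}-\|Q\|^2_{\Sob{2}{0}(\ball_p^r)}=\frac{J-3I}{4}\bigl(\|D(\Tran{T}BT)\|_\mathrm{F}^2-\|D(B)\|_\mathrm{F}^2\bigr)$. Hence the norm is invariant for all $Q$ and all $T$ if and only if $J=3I$; the ``if'' is immediate, and the ``only if'' follows by exhibiting a single pair for which $\|D\|_\mathrm{F}^2$ changes, e.g. $B=e_1\Tran{e_1}$ rotated by $\pi/4$ in the $(1,2)$-plane, which turns the diagonal $(1,0,\dots,0)$ into $(\tfrac12,\tfrac12,0,\dots,0)$. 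Feeding the expressions \eqref{Ip} and \eqref{Jp} for $I$ and $J$ into $J=3I$, the common factors cancel and the condition collapses to $\frac{\Gamma(5/p)\Gamma(1/p)}{[\Gamma(3/p)]^2}=3$, so the whole of a.) reduces to showing this identity holds precisely at $p=2$.

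Writing $t=1/p\in(0,1]$ and $h(t)=\frac{\Gamma(5t)\Gamma(t)}{[\Gamma(3t)]^2}$, a direct evaluation gives $h(\tfrac12)=3$, so it remains to prove $h$ is injective, for which I would establish strict monotonicity. The clean route is to differentiate $\log h$ and use the series $\psi(x)=-\gamma+\sum_{k\geq0}\bigl(\frac{1}{k+1}-\frac{1}{k+x}\bigr)$ for the digamma function $\psi=(\log\Gamma)'$: the $\gamma$ and $\frac{1}{k+1}$ contributions cancel because the weights satisfy $5+1=6$, leaving $(\log h)'(t)=\sum_{k\geq0}\bigl(\frac{6}{k+3t}-\frac{5}{k+5t}-\frac{1}{k+t}\bigr)$. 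The main work, and the step I expect to be the real obstacle, is to check that each summand is nonnegative; placing the three fractions over the common denominator $(k+t)(k+3t)(k+5t)$, the $k^2$ and $t^2$ parts of the numerator vanish (again by the balanced coefficients) and it reduces to $8kt\geq0$. Thus $(\log h)'>0$ on $(0,1]$, so $h$ is strictly increasing and $h(t)=3$ forces $t=\tfrac12$, i.e. $p=2$, which closes part a.).
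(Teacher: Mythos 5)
Your proof is correct and takes essentially the same route as the paper: invariance is reduced via Theorem \ref{Thp} to $J=3I$, i.e.\ to $\Gamma(\tfrac{5}{p})\Gamma(\tfrac{1}{p})/[\Gamma(\tfrac{3}{p})]^2=3$, and uniqueness of the solution $p=2$ follows from strict monotonicity established by a digamma-series computation. The only differences are cosmetic: the paper invokes its general appendix result (Proposition \ref{mono}, applied with $\lambda+\mu=2$, where the same cancellation structure appears) instead of your direct special-case calculation, and it merely asserts the non-invariance of $\|D\|_\mathrm{F}$ where you exhibit the $\pi/4$-rotation example.
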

\begin{proof}
According to Theorem \ref{Thp},~$\|\cdot\|_{\Sob{2}{0}(\ball_p^r)}$ is invariant under orthogonal transformations if and only if 
\begin{equation}\label{Jp3Ip}
J=3I,
\end{equation}
since $\|B\|_\mathrm{F}$,~$\Tr B$ and $\|g\|_2$ are invariant while $\|D\|_\mathrm{F}$ is not.~(\ref{Jp3Ip})~is equivalent to 
\begin{equation}\label{Gamma3}
\frac{\Gmf\left(\frac{5}{p}\right)\Gmf\left(\frac{1}{p}\right)}{\left[\Gmf\left(\frac{3}{p}\right)\right]^2} = 3,
\end{equation}
which assumes at most one solution,~since the left hand side of it is strictly decreasing for $p\in(0,\infty)$~according to Proposition \ref{mono}.~Obviously~$p=2$ is a solution to~(\ref{Gamma3}),~and hence the only one.~Thus the invariance of $\|\cdot\|_{\Sob{2}{0}(\ball_p^r)}$ holds if and only if $p=2$.

The invariance of $|\cdot|_{\Sob{2}{1}(\ball_p^r)}$ is easy to check in light of Theorem \ref{Thp}. 
\end{proof}
\subsection{The Sobolev~(Semi)Norms over $\ellp$-Ellipsoids}
By $\ellp$-ellipsoid we mean a set of the form
\begin{equation}
\{x\in\Real^n;~\|A(x-x_0)\|_p\leq r\},
\end{equation}
where $A\in\Real^\nxn$ is a nonsingular matrix.~The Sobolev (semi)norms of $Q$ over an $\ellp$-ellipsoid~can be deduced from Theorem \ref{Thp} easily.
\subsection{The Weighted Sobolev (Semi)Norms}
It may be meaningful to consider weighted Sobolev (semi)norms defined by
\begin{align}
\|Q\|_{\Sob{2}{0}(\ball_p^r,w)}&=\left[\int_{\ball_p^r}w(x)|f(x)|^2\md x\right]^{1/2},\\
|Q|_{\Sob{2}{1}(\ball_p^r,w)}&=\left[\int_{\ball_p^r}w(x)\|\nabla f(x)\|_2^2\md x\right]^{1/2},
\end{align}
where $w$ is a nonnegative bounded function defined on $\ball_p^r$, playing the role of a weight. They can be calculated via the procedure presented in Section \ref{proofs}, and the main labor is integrating related monomials with respect to the weight. 

As an example,~we show without proof $\|Q\|_{\Sob{2}{0}(\ball_2^r,w)}$ and $|Q|_{\Sob{2}{1}(\ball_2^r,w)}$ with the weight
\begin{equation}
w(x) = r^2-\|x\|_2^2.  
\end{equation}
\begin{Prop}\label{Thw}
\begin{align}
\|Q\|^2_{\Sob{2}{0}(\ball_2^r,w)} &= \frac{2V_{2}\,r^{n+2}}{n+2}\left[\frac{r^4\left(2\|B\|_\mathrm{F}^2+\Tr^2 B\right)}{4(n+4)(n+6)}+\frac{r^2(c\Tr B+\|g\|_2^2)}{n+4}+c^2\right];\\
|Q|^2_{\Sob{2}{1}(\ball_2^r,w)} &= \frac{2V_{2}\,r^{n+2}}{n+2}\left[\frac{r^2}{n+4}\|B\|_\mathrm{F}^2+\|g\|_2^2\right].
\end{align}
\end{Prop}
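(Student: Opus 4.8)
The plan is to rerun the proof of Theorem~\ref{Thp} at $p=2$ with the single modification that every monomial is integrated against the weight $w(x)=r^2-\|x\|_2^2$. The essential observation is that the weighted measure $w(x)\,\md x$ on $\ball_2^r$ is invariant under every orthogonal transformation, because both $w$ and $\ball_2^r$ depend on $x$ only through $\|x\|_2$. Since the vanishing relations of Lemma~\ref{null} and the coordinate-symmetry manipulations of Lemma~\ref{integrals} use nothing beyond this invariance, the whole expansion survives verbatim once $I$, $J$, $K$ and the volume $V_2\,r^n$ are replaced by their weighted analogues
\[
\tilde I=\!\int_{\ball_2^r}\!\!w\,x_1^2x_2^2\,\md x,\ \
\tilde J=\!\int_{\ball_2^r}\!\!w\,x_1^4\,\md x,\ \
\tilde K=\!\int_{\ball_2^r}\!\!w\,x_1^2\,\md x,\ \
\tilde V=\!\int_{\ball_2^r}\!\!w\,\md x.
\]
This produces $\|Q\|^2_{\Sob{2}{0}(\ball_2^r,w)}=\tfrac{\tilde I}{4}(2\|B\|_\mathrm{F}^2+\Tr^2B)+\tfrac{\tilde J-3\tilde I}{4}\|D\|_\mathrm{F}^2+\tilde K(c\Tr B+\|g\|_2^2)+\tilde Vc^2$ and $|Q|^2_{\Sob{2}{1}(\ball_2^r,w)}=\tilde K\|B\|_\mathrm{F}^2+\tilde V\|g\|_2^2$, in exact parallel with the proof of Theorem~\ref{Thp}.

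Next I would show that the $\|D\|_\mathrm{F}^2$ term disappears, i.e. that $\tilde J=3\tilde I$, which is what makes the final formulae as clean as those in Corollary~\ref{Th2}. Rather than compute $\tilde I$ and $\tilde J$ separately, I would exploit rotational invariance directly: applying the planar rotation that mixes the first two coordinates to the identity $\tilde J=\int_{\ball_2^r}w\,(\cos\theta\,x_1+\sin\theta\,x_2)^4\,\md x$ and expanding, the odd cross terms integrate to zero and $\int_{\ball_2^r}w\,x_1^4\,\md x=\int_{\ball_2^r}w\,x_2^4\,\md x$, so the coefficient $(\cos^4\theta+\sin^4\theta)\tilde J+6\cos^2\theta\sin^2\theta\,\tilde I$ can equal the $\theta$-independent value $\tilde J$ for all $\theta$ only when $\tilde J=3\tilde I$. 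The diagonal term then vanishes, leaving exactly the two-term shapes in the Proposition.

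It remains to evaluate the three weighted moments $\tilde V$, $\tilde K$, $\tilde I$, and this is where the only real subtlety appears. Writing $w=r^2-\sum_k x_k^2$ and expanding would reduce $\tilde I$ to monomials such as $x_1^2x_2^2x_3^2$, which Lemma~\ref{int}, being a two-variable formula, does not cover. I would avoid this by separating radius from angle: for a rotationally symmetric integrand the integral factors as a purely angular constant times a one-dimensional radial integral of the form $\int_0^r(r^2-s^2)s^{m}\,\md s$, and the angular constant can simply be read off from the corresponding \emph{unweighted} moment $I$, $K$, or $V_2\,r^n$ already known from Corollary~\ref{Th2}. Carrying out the elementary radial integrals gives $\tilde V=\tfrac{2V_2\,r^{n+2}}{n+2}$, $\tilde K=\tfrac{2V_2\,r^{n+4}}{(n+2)(n+4)}$ and $\tilde I=\tfrac{2V_2\,r^{n+6}}{(n+2)(n+4)(n+6)}$; substituting these into the two expressions above and factoring out $\tilde V$ reproduces the claimed formulae. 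The main labour is thus exactly the weighted monomial integrals flagged in the text, and the lone obstacle---the genuinely multivariate moment---is dissolved by the radial--angular split that rotational symmetry permits.
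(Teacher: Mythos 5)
Your proposal is correct, and in fact it supplies something the paper deliberately omits: Proposition~\ref{Thw} is stated ``without proof,'' the text saying only that it can be obtained by the procedure of Section~\ref{proofs}, ``the main labor'' being the weighted monomial integrals. Your argument is a faithful realization of that sketch — the expansion into $\tilde I$, $\tilde J$, $\tilde K$, $\tilde V$ is exactly the weighted analogue of Lemma~\ref{integrals} and of the proof of Theorem~\ref{Thp}, and it is legitimate because the measure $w(x)\md x$ inherits the sign-flip and permutation symmetries that Lemmas~\ref{null} and~\ref{integrals} actually use. Where you genuinely improve on the route the paper hints at is in the evaluation of the moments. The naive reading of ``integrate related monomials with respect to the weight'' would expand $w(x)=r^2-\sum_k x_k^2$ and run into three-variable moments such as $\int_{\ball_2^r}x_1^2x_2^2x_3^2\md x$, which the two-variable Lemma~\ref{int} does not cover; your radial--angular factorization, with the angular constants read off from the unweighted moments $I=\frac{V_2 r^{n+4}}{(n+2)(n+4)}$, $K=\frac{V_2 r^{n+2}}{n+2}$, $V=V_2r^n$ of Theorem~\ref{Thp} at $p=2$, sidesteps this entirely, and the rotation-in-a-coordinate-plane argument giving $\tilde J=3\tilde I$ removes the $\|D\|_\mathrm{F}^2$ term without computing $\tilde J$ at all (it is also the cleaner explanation of \emph{why} the $p=2$ formulae are rotation-invariant, echoing the discussion around~(\ref{Jp3Ip})). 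Your three radial integrals check out: $\tilde V=\frac{2V_2r^{n+2}}{n+2}$, $\tilde K=\frac{2V_2r^{n+4}}{(n+2)(n+4)}$, $\tilde I=\frac{2V_2r^{n+6}}{(n+2)(n+4)(n+6)}$, and substitution reproduces both displayed formulae exactly. The only cosmetic slip is attributing the unweighted moments to Corollary~\ref{Th2} (which states the norms, not $I$, $K$, $V$ themselves); they come from Theorem~\ref{Thp} specialized to $p=2$.
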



\appendix

\section*{Appendix} 
\section{Some Propositions about the Gamma Function}
\begin{Prop}\label{limit}
Suppose $\lambda$ is a positive constant.~It holds
\begin{equation}
\lim_{t\rightarrow 0^+}\frac{\Gamma(t)}{\Gamma(\lambda t)} = \lambda.
\end{equation}
\end{Prop}
\begin{proof}
According to the Weierstrass product of Gamma function \cite{artin-gamma},~we have
\begin{equation}
\Gamma(t) = \frac{\me^{-\gamma t}}{t}\prod_{k=1}^{\infty}\left[\left(1+\frac{t}{k}\right)^{-1}\me^{\frac{t}{k}}\right].
\end{equation}
Hence it suffices to show that
\begin{equation}
\lim_{t\rightarrow 0^+}\frac{\prod_{k=1}^{\infty}(1+\frac{t}{k})^{-1}\me^{\frac{t}{k}}}{\prod_{k=1}^{\infty}{(1+\frac{\lambda t}{k})^{-1}\me^{\frac{\lambda t}{k}}}} = 1,
\end{equation}
which is true according to the following deduction.

When $t\in(0,\frac{1}{2(1+|1-\lambda|)})$,
\begin{equation}
\begin{split}
&~\left|\log\frac{\prod_{k=1}^{\infty}(1+\frac{t}{k})^{-1}\me^{\frac{t}{k}}}{\prod_{k=1}^{\infty}{(1+\frac{\lambda t}{k})^{-1}\me^{\frac{\lambda t}{k}}}}\right|\\
=&~\left|\sum_{k=1}^{\infty}\left[\frac{t}{k}-\log\left(1+\frac{t}{k}\right)\right]-\sum_{k=1}^{\infty}\left[\frac{\lambda t}{k}-\log\left(1+\frac{\lambda t}{k}\right)\right]\right|\\
\leq&~\sum_{k=1}^{\infty}\left|\frac{(1-\lambda)t}{k}-\log\left[1+\frac{(1-\lambda)t}{k+\lambda t}\right]\right|\\
=&~\sum_{k=1}^{\infty}\left|\frac{(1-\lambda)t}{k}-\frac{(1-\lambda)t}{k+\lambda t}+\frac{1}{2(1+\xi_k)^2}\cdot\frac{(1-\lambda)^2t^2}{(k+\lambda t)^2}\right|\\
\leq&~\sum_{k=1}^{\infty}\left[\frac{|1-\lambda|\lambda t^2}{k^2}+\frac{(1-\lambda)^2t^2}{2(1/2)^2k^2}\right]\\
=&~\frac{\pi^2}{6}(\lambda+2|1-\lambda|)|1-\lambda|t^2,
\end{split}
\end{equation}
where $\xi_k$ is due to Taylor expansion with Lagrange remainder,~and $\xi_k\geq-\frac{1}{2}$ when $t\in(0,\frac{1}{2(1+|1-\lambda|)})$. 
\end{proof}

\begin{Prop}\label{mono}
Suppose $\lambda$ and $\mu$ are positive constants satisfying $\lambda+\mu\geq 2$.~Then the function
\begin{equation}
\phi(t) = \frac{\Gamma(\lambda t)\Gamma(\mu t)}{[\Gamma(t)]^{\lambda+\mu}}
\end{equation}
is monotonically increasing over $(0,\infty)$,~and the monotonicity is strict unless $\lambda=1=\mu$.
\end{Prop}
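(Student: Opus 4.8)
The plan is to reduce the monotonicity of $\phi$ to the nonnegativity of its logarithmic derivative. Since $\Gamma$ is positive on $(0,\infty)$ we have $\phi(t)>0$ for all $t>0$, so $\phi$ is increasing iff $(\log\phi)'\ge 0$. Writing $\psi=\Gamma'/\Gamma$ for the digamma function and differentiating $\log\phi(t)=\log\Gamma(\lambda t)+\log\Gamma(\mu t)-(\lambda+\mu)\log\Gamma(t)$ gives
\[
\frac{\phi'(t)}{\phi(t)}=\lambda\psi(\lambda t)+\mu\psi(\mu t)-(\lambda+\mu)\psi(t)=:f(t),
\]
so everything reduces to proving $f(t)\ge 0$, with the inequality strict (for some $t$, indeed all $t$) unless $\lambda=\mu=1$.

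Next I would substitute the classical absolutely convergent series $\psi(x)=-\gamma-\tfrac1x+\sum_{k=1}^{\infty}\bigl(\tfrac1k-\tfrac1{k+x}\bigr)$. Upon substitution the $\gamma$-terms cancel, the $\tfrac1k$-terms cancel, and the $\tfrac1x$ pole terms combine into $\tfrac{\lambda+\mu-2}{t}$, leaving
\[
f(t)=\frac{\lambda+\mu-2}{t}+\sum_{k=1}^{\infty}\left[\frac{\lambda+\mu}{k+t}-\frac{\lambda}{k+\lambda t}-\frac{\mu}{k+\mu t}\right].
\]
The first term is $\ge 0$ exactly because of the hypothesis $\lambda+\mu\ge 2$, so it remains to show each bracketed summand is nonnegative. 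For fixed $k,t>0$ set $G(a)=\frac{a}{k+at}$; a direct computation gives $G''(a)=-\frac{2kt}{(k+at)^{3}}<0$, so $G$ is strictly concave. By Jensen's inequality $G(\lambda)+G(\mu)\le 2G\bigl(\tfrac{\lambda+\mu}{2}\bigr)=\frac{\lambda+\mu}{k+\frac{\lambda+\mu}{2}t}$, and since $\lambda+\mu\ge 2$ the denominator obeys $k+\tfrac{\lambda+\mu}{2}t\ge k+t$, whence $G(\lambda)+G(\mu)\le\frac{\lambda+\mu}{k+t}$, which is precisely the nonnegativity of the $k$-th summand. Combining the two pieces yields $f\ge 0$, hence $\phi$ is increasing.

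For the strictness claim I would track the two inequalities just used. If $\lambda+\mu>2$, the term $\tfrac{\lambda+\mu-2}{t}$ is already strictly positive for every $t$, forcing $f(t)>0$. If $\lambda+\mu=2$ but $(\lambda,\mu)\ne(1,1)$, then $\lambda\ne\mu$, so strict concavity of $G$ makes the Jensen step strict and every summand is strictly positive; again $f>0$. Thus $f$ vanishes identically only in the excluded case $\lambda=\mu=1$ (where $\phi\equiv 1$), which gives the strict monotonicity.

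I expect the crux to be \emph{finding the right decomposition} rather than any single estimate: the key observation is that the pole terms of the $\psi$-series assemble into the clean quantity $\tfrac{\lambda+\mu-2}{t}$ that matches the hypothesis, while the entire residual series is controlled by one uniform concavity bound on $G(a)=a/(k+at)$. Once this structure is in place, the remaining work—the cancellations, the justification of term-by-term manipulation via absolute convergence, and the equality analysis—is routine bookkeeping.
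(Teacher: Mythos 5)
Your proof is correct, and its skeleton coincides with the paper's: both pass to the logarithmic derivative, expand it by the partial-fraction series of the digamma function, and arrive at the same decomposition into the pole term $\tfrac{\lambda+\mu-2}{t}$ plus the series $\sum_{k\ge1}\bigl[\tfrac{\lambda+\mu}{k+t}-\tfrac{\lambda}{k+\lambda t}-\tfrac{\mu}{k+\mu t}\bigr]$. Where you genuinely differ is the verification that each summand is nonnegative. You invoke Jensen's inequality for the strictly concave map $a\mapsto a/(k+at)$ and then use $\lambda+\mu\ge 2$ a second time to compare denominators; the paper instead puts the three fractions over the common denominator $(\lambda t+k)(\mu t+k)(t+k)$ and reads off that the numerator, $kt\bigl[(\lambda^2+\mu^2)-(\lambda+\mu)\bigr]+t^2\lambda\mu(\lambda+\mu-2)$, is positive because $\lambda^2+\mu^2>\lambda+\mu$ under the hypotheses. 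The two finishes are equivalent in content (your concavity bound is the structural reason the paper's numerator is positive), but each buys something: the paper's algebra shows every summand is \emph{strictly} positive in all non-excluded cases, whereas your argument cleanly separates the two sources of strictness — the pole term when $\lambda+\mu>2$, and strict Jensen (from $\lambda\ne\mu$) when $\lambda+\mu=2$ — making the equality analysis explicit where the paper compresses it into the phrase ``due to our assumptions.'' Your remarks on absolute convergence and on why only the excluded case $\lambda=\mu=1$ (where $\phi\equiv1$) loses strictness are also correct.
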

\begin{proof}
We suppose $\lambda=1=\mu$ does not happen and prove the strict monotonicity for $\log\phi(t)$~by inspecting its derivative.~To do this,~consider the \emph{digamma} function 
\begin{equation}
\psi(t) = \frac{\md}{\md t}\log\Gamma(t)
\end{equation}
and its partial fraction expansion \cite{artin-gamma} 
\begin{equation}
\psi(t) = -\gamma-\frac{1}{t}+\sum_{k=1}^\infty\frac{t}{k(t+k)}.
\end{equation}
For any $t>0$,
\begin{equation}
\begin{split}
&~\frac{\md}{\md t}\log\phi(t)\\
=&~\lambda \psi(\lambda t)+\mu \psi(\mu t)-(\lambda+\mu)\psi(t)\\
=&~\lambda\left[-\gamma-\frac{1}{\lambda t}+\sum_{k=1}^{\infty}\frac{\lambda t}{k(\lambda t+k)}\right]+\mu\left[-\gamma-\frac{1}{\mu t}+\sum_{k=1}^{\infty}\frac{\mu t}{k(\mu t+k)}\right]\\
&~-(\lambda+\mu)\left[-\gamma-\frac{1}{t}+\sum_{k=1}^{\infty}\frac{t}{k(t+k)}\right]\\
=&~\frac{1}{t}(\lambda+\mu-2)+\sum_{k=1}^{\infty}\frac{t}{k}\left(\frac{\lambda^2}{\lambda t+k}+\frac{\mu^2}{\mu t+k}-\frac{\lambda+\mu}{t+k}\right)\\
=&~\frac{1}{t}(\lambda+\mu-2)+\sum_{k=1}^{\infty}\frac{kt[(\lambda^2+\mu^2)-(\lambda+\mu)]+t^2\lambda\mu(\lambda+\mu-2)}{(\lambda t+k)(\mu t+k)(t+k)}\\
>&~0,
\end{split}
\end{equation}
the last inequality being true because
\begin{equation}
\lambda^2+\mu^2>\lambda+\mu
\end{equation}
due to our assumptions on $\lambda$ and $\mu$.
\end{proof}

\bibliographystyle{unsrt}
\bibliography{ref}

\end{document}